\newtheorem{theorem}{Theorem}[section]
\newtheorem{lemma}[theorem]{Lemma}
\newtheorem{proposition}[theorem]{Proposition}
\newtheorem{corollary}[theorem]{Corollary}
\theoremstyle{definition}
\numberwithin{equation}{section}
\begin{document}

\title[Numerical radius inequalities]{Some generalized numerical radius inequalities for Hilbert space operators }

\author[M. Sattari, M. Moslehian, T. Yamazaki]{Mostafa Sattari$^1$, Mohammad Sal Moslehian$^1$ and Takeaki Yamazaki$^2$}

\address{$^1$ Department of Pure Mathematics, Center Of Excellence in Analysis on Algebraic Structures (CEAAS), Ferdowsi University of Mashhad, P. O. Box 1159, Mashhad 91775, Iran}
\email{moslehian@ferdowsi.um.ac.ir}
\email{Msattari.b@gmail.com}
\address{$^2$ Department of Electrical, Electronic and Computer Engineering Toyo University, Kawagoe-Shi, Saitama, 350-8585 Japan}
\email{e-mail: t-yamazaki@toyo.jp}

\subjclass[2010]{47A12, 47A30, 47A63 47B47.}

\keywords{Heinz means, numerical radius, positive operator, operator norm.}

\begin{abstract}
We generalize several inequalities involving powers of the numerical radius for product of two operators acting on a Hilbert space.
For any $A, B, X\in \mathbb{B}(\mathscr{H})$ such that $A,B$ are positive, we establish some numerical radius inequalities for $A^\alpha XB^\alpha$ and $A^\alpha X B^{1-\alpha}\,\,(0 \leq \alpha \leq 1)$ and Heinz means under mild conditions.

\end{abstract} \maketitle

\section{Introduction}

Let $(\mathscr{H},\langle \cdot,\cdot\rangle )$ be a complex Hilbert space and $\mathbb{B}(\mathscr{H})$ denote the $C^*$-Algebra of all bounded linear operators on $\mathscr{H}$. An operator $A \in \mathbb{B}(\mathscr{H})$ is called positive if
$ \langle A x, x \rangle\geq0$ for all $x\in \mathscr{H}$. We write $A\geq 0$ if $A$ is positive.
The numerical radius of $A \in \mathbb{B}(\mathscr{H})$ is defined by
\begin{equation*}
w(A)= \sup \{ | \langle A x, x \rangle | : x \in \mathscr{H} , \| x \| =1 \}.
\end{equation*}
It is well known that $w(\cdot)$ defines a norm on $\mathbb{B}(\mathscr{H})$, which is equivalent to the usual operator norm $\| \cdot \|$. In fact, for any $A \in \mathbb{B}(\mathscr{H})$,
\begin{eqnarray}\label{1.1}
\frac{1}{2} \| A \| \leq w(A) \leq \| A \|.
\end{eqnarray}
Also if $A\in \mathbb{B}(\mathscr{H})$ is self-adjoint, then $w(A)= \| A \|.$\\
An important inequality for $w(A)$ is the power inequality stating that
\begin{equation*}
w(A^n)\leq w^n(A)
\end{equation*}
for $n=1,2,\ldots$\\
Several numerical radius inequalities improving the inequalities in \eqref{1.1} have been recently given in \cite{4,5,14}.

For instance, Dragomir proved that for any $A,B \in \mathbb{B}(\mathscr{H})$,
\begin{eqnarray} \label{4.1}
w^2 (A) \leq \frac{1}{2} \big( w(A ^2) + \| A\| ^2\big),
\end{eqnarray}
and
\begin{eqnarray} \label{5.1}
w^r (B ^* A ) \leq \frac{1}{2}\| (A^* A) ^r +( B^*B )^r \|
\end{eqnarray}
for all $r \geq 1$. The above inequalities can be found in \cite{5,3}, respectively. Some other interesting inequalities for numerical radius can be found in \cite{14,Mia,Y}.\\
In section \ref{eq22} of this paper, we first generalize inequalities \eqref{4.1} and \eqref{5.1}.\\ Our generalization of inequality \eqref{5.1}  in a particular case is sharper than this inequality.\\
In section \ref{eq33} we obtain numerical radius inequalities for Hilbert space operators $A^\alpha X B^\alpha$ and $A^\alpha X B^{1-\alpha}$ under conditions $A,B\geq0$ and $ 0 \leq \alpha \leq 1$. We also find a numerical radius inequality for Heinz means.

\section{Numerical radius inequalities for products of Two operators}
\label{eq22}

 To prove our generalized numerical radius inequalities, we need several well-known lemmas.
 The first lemma is a simple consequence of the classical Jensen and Young inequalities (see \cite{8}).
\begin{lemma} \label{le1}
 For $ a,b \geq0$, $0 \leq \alpha \leq 1$ and $p,q>1$ such that $\frac{1}{p}+\frac{1}{q}=1$
\begin{enumerate}
\item[(a)]
$ \displaystyle
a^\alpha b^{1-\alpha} \leq \alpha a +(1-\alpha)b \leq
\left[\alpha a^r +(1-\alpha) b^r\right]^{\frac{1}{r}} \quad\text{for}\quad r\geq 1$,
\item[(b)]
$\displaystyle
ab\leq\frac{a^p}{p}+\frac{a^q}{q}\leq \left(\frac{a^{pr}}{p}+\frac{b^{qr}}{q}\right)^{\frac{1}{r}}
\quad\text{for} \quad r\geq 1.$
\end{enumerate}
\end{lemma}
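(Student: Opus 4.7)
The plan is to deduce both parts directly from Jensen's inequality applied to two appropriate functions, exactly as the authors suggest.

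For part (a), the left inequality $a^{\alpha}b^{1-\alpha}\le \alpha a+(1-\alpha)b$ is the weighted AM--GM inequality, which I would obtain by applying Jensen's inequality to the concave function $\log$ on $(0,\infty)$ with weights $\alpha,1-\alpha$ at the points $a,b>0$; exponentiating gives the claim, and the boundary cases $a=0$ or $b=0$ (with the usual convention $0^{0}=1$ and $0^{\alpha}=0$ for $\alpha>0$) are immediate. For the right inequality I would instead use the convexity of $t\mapsto t^{r}$ on $[0,\infty)$ when $r\ge 1$: Jensen yields $(\alpha a+(1-\alpha)b)^{r}\le \alpha a^{r}+(1-\alpha)b^{r}$, and since $t\mapsto t^{1/r}$ is monotone on $[0,\infty)$, taking $r$-th roots produces the bound.

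For part (b) I would not reprove anything; instead I would specialize part (a). Choose $\alpha=1/p$, so that $1-\alpha=1/q$ by the conjugacy condition $\tfrac1p+\tfrac1q=1$, and replace $a$ and $b$ by $a^{p}$ and $b^{q}$ respectively. The double inequality from (a) then becomes
\[
ab \;=\; a^{p\cdot(1/p)}b^{q\cdot(1/q)} \;\le\; \frac{a^{p}}{p}+\frac{b^{q}}{q} \;\le\; \left(\frac{a^{pr}}{p}+\frac{b^{qr}}{q}\right)^{1/r},
\]
which is precisely the statement of (b); the first inequality here is the classical Young inequality, and the second inherits from the power-mean step in (a).

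There is essentially no obstacle: the lemma simply repackages weighted AM--GM (Young's inequality) on the left with the monotonicity of power means on the right, and part (b) is a mechanical substitution into part (a). The only minor points to check are the boundary cases $a=0$ or $b=0$ in the very first step, which are handled by continuity and the standard conventions on $0^{0}$.
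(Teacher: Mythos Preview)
Your proposal is correct and matches the paper's intent: the paper does not give a proof but simply states that the lemma ``is a simple consequence of the classical Jensen and Young inequalities,'' and your argument is exactly that---AM--GM (via Jensen for $\log$) for the left inequality, convexity of $t\mapsto t^{r}$ for the right, and the substitution $\alpha=1/p$, $a\mapsto a^{p}$, $b\mapsto b^{q}$ to obtain (b) from (a). You also silently corrected the evident typo in the middle term of (b), which should read $\frac{a^{p}}{p}+\frac{b^{q}}{q}$ rather than $\frac{a^{p}}{p}+\frac{a^{q}}{q}$.
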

The second lemma follows from the spectral theorem for positive operators and Jensen's inequality (see \cite{11}).

\begin{lemma} [\textbf{McCarty inequality}] \label{le2}
Let $A \in \mathbb{B}(\mathscr{H})$, $ A \geq 0$ and let $x \in \mathscr{H}$ be any unit vector. Then
\begin{enumerate}
\item[(a)]
$\langle A x, x \rangle ^r \leq \langle A^r x, x \rangle \quad \text{for} \quad r \geq 1$,
\item[(b)]
$\langle A^r x , x \rangle \leq \langle Ax, x \rangle ^r \quad \text{for} \quad 0<r \leq 1$.
\end{enumerate}
\end{lemma}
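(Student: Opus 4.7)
The plan is to reduce both inequalities to the scalar Jensen inequality by means of the spectral theorem. Since $A\geq 0$, we have a spectral resolution $A=\int_{\sigma(A)}\lambda\,dE(\lambda)$ with $\sigma(A)\subseteq[0,\infty)$. For the fixed unit vector $x$, I would introduce the Borel probability measure $\mu_x$ on $\sigma(A)$ defined by $\mu_x(S)=\langle E(S)x,x\rangle$; the fact that $\mu_x(\sigma(A))=\langle x,x\rangle=1$ is exactly where the unit norm hypothesis enters.

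With $\mu_x$ in hand, the functional calculus gives the two key identities
\begin{equation*}
\langle Ax,x\rangle=\int_{\sigma(A)}\lambda\,d\mu_x(\lambda),\qquad \langle A^{r}x,x\rangle=\int_{\sigma(A)}\lambda^{r}\,d\mu_x(\lambda),
\end{equation*}
so the problem collapses to comparing $\bigl(\int\lambda\,d\mu_x\bigr)^{r}$ with $\int\lambda^{r}\,d\mu_x$ on a probability space. Part (a) then follows by applying Jensen's inequality to the convex function $t\mapsto t^{r}$ on $[0,\infty)$ when $r\geq 1$, and part (b) follows by applying Jensen's inequality to the concave function $t\mapsto t^{r}$ on $[0,\infty)$ when $0<r\leq 1$ (which reverses the direction of the inequality).

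There is essentially no hard step: the only items worth double-checking are that $\mu_x$ is genuinely a probability measure (so that Jensen applies in its standard form) and that $t\mapsto t^{r}$ has the correct convexity on the nonnegative half-line in each regime, which is classical. If one wished to avoid measure-theoretic spectral calculus, an alternative route would be to argue first for finite-rank positive operators by diagonalizing and applying the discrete Jensen inequality to the probability weights $|\langle x,e_i\rangle|^{2}$ against the eigenvalues, and then to pass to the general case by a standard approximation of $A$ by its spectral truncations; but the spectral-measure approach is shorter and is what the authors indicate.
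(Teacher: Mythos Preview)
Your argument is correct and is exactly the approach the paper indicates: it states that the lemma ``follows from the spectral theorem for positive operators and Jensen's inequality'' and gives no further details, so your spectral-measure-plus-Jensen reduction matches the intended proof.
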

The third lemma is known as the generalized mixed Schwarz inequality (see\cite{11}).
\begin{lemma} \label{le3}
Let $A \in \mathbb{B}(\mathscr{H})$ and $x,y \in \mathscr{H}$ be any vector.
\begin{enumerate}
\item[(a)]
If $ 0 \leq \alpha \leq 1$, then
$| \langle Ax, y \rangle | ^2 \leq \langle | A | ^{2 \alpha } x,x \rangle \langle | A^*| ^{2(1-\alpha) } y,y \rangle$,
\item[(b)]
If $f,g$ are nonnegative continuous functions on $[0, \infty) $ satisfying $f(t) g(t) =t $, $( t \geq 0)$, then
$| \langle A x,y \rangle | \leq \| f(|A| ) x \| \| g (| A^*|)y\|.$ \end{enumerate}
\end{lemma}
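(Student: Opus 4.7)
The plan is to deduce both statements from the polar decomposition $A=U|A|$, where $U$ is a partial isometry with initial space $\overline{\mathrm{ran}(|A|)}$ and $|A^*|=U|A|U^*$. This gives the intertwining relation $Uh(|A|)=h(|A^*|)U$ for every continuous $h$ with $h(0)=0$, and hence $h(|A|)U^*=U^*h(|A^*|)$.

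I would prove part (b) first, since (a) will be an immediate specialization. Writing $A=U|A|=Ug(|A|)f(|A|)$ (using that $f$ and $g$ are functions of the same positive operator $|A|$ and $f(t)g(t)=t$), I push one factor across the inner product:
\begin{equation*}
\langle Ax,y\rangle=\bigl\langle Ug(|A|)f(|A|)x,\,y\bigr\rangle=\bigl\langle f(|A|)x,\,g(|A|)U^{*}y\bigr\rangle.
\end{equation*}
Using the intertwining identity in the form $g(|A|)U^{*}=U^{*}g(|A^{*}|)$, I get $\langle Ax,y\rangle=\langle f(|A|)x,\,U^{*}g(|A^{*}|)y\rangle$. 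The Cauchy--Schwarz inequality together with $\|U^{*}\|\le 1$ then yields
\begin{equation*}
|\langle Ax,y\rangle|\le \|f(|A|)x\|\,\|U^{*}g(|A^{*}|)y\|\le \|f(|A|)x\|\,\|g(|A^{*}|)y\|,
\end{equation*}
which is (b).

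For part (a), I apply (b) with the particular choice $f(t)=t^{\alpha}$ and $g(t)=t^{1-\alpha}$, which satisfies $f(t)g(t)=t$ on $[0,\infty)$ for every $\alpha\in[0,1]$. Squaring the resulting inequality and rewriting each norm as an inner product gives
\begin{equation*}
\|f(|A|)x\|^{2}=\langle |A|^{2\alpha}x,x\rangle,\qquad \|g(|A^{*}|)y\|^{2}=\langle |A^{*}|^{2(1-\alpha)}y,y\rangle,
\end{equation*}
which is exactly the stated bound.

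The only technical subtlety is that the intertwining relation $h(|A|)U^{*}=U^{*}h(|A^{*}|)$ is cleanest when $h(0)=0$; this is the step I expect to be the main obstacle. For the choices $t^{\alpha}$ and $t^{1-\alpha}$ this is automatic when $\alpha\in(0,1)$, and the boundary cases $\alpha\in\{0,1\}$ reduce to the classical mixed Schwarz inequality, which can be treated separately or by a continuity argument letting $\alpha$ approach the endpoint. For a general pair $(f,g)$ in (b) one argues by an approximation of $f$ and $g$ by functions vanishing at $0$, using that the kernel contributions cancel because $U^{*}$ vanishes on $\ker(A^{*})$.
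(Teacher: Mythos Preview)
The paper does not actually prove this lemma: it is quoted as the generalized mixed Schwarz inequality and referenced to Kittaneh \cite{11}, with no argument given. So there is no ``paper's proof'' to compare against; your write-up would serve as a complete replacement.

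Your argument is the standard one and is correct. One remark: the closing paragraph about the ``main obstacle'' is unnecessary. From $U|A|=|A^{*}|U$ one gets $U|A|^{n}=|A^{*}|^{n}U$ for every $n\ge 0$ (the case $n=0$ is trivially $U=U$), hence $Up(|A|)=p(|A^{*}|)U$ for every polynomial $p$, and by uniform approximation $Uh(|A|)=h(|A^{*}|)U$ for every continuous $h$ on $\sigma(|A|)$, with no requirement that $h(0)=0$. Taking adjoints gives $g(|A|)U^{*}=U^{*}g(|A^{*}|)$ directly, so the approximation/endpoint discussion can be dropped and both (a) and (b) follow exactly as you wrote.
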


Now we are in a position to state the main result of this section.
First of all, we generalize inequality \eqref{4.1} for any $ r \geq 1$. We use some strategies used in \cite{5} to prove it.

\begin{theorem}
If $A \in \mathbb{B}(\mathscr{H})$, then
\begin{equation}\label{MOS1}
w^{2r} (A) \leq \frac{1}{2} \big( w^r(A^2) + \| A \| ^{2r} \big)
\end{equation}
for any $r \geq1$.
\end{theorem}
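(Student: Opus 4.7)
The plan is to mimic Dragomir's original derivation of \eqref{4.1} and insert one convexity step at the end to accommodate the exponent $r \geq 1$. Fix a unit vector $x \in \mathscr{H}$; without loss of generality $\langle Ax,x\rangle \neq 0$. I would pick $\theta \in \mathbb{R}$ so that $e^{-i\theta}\langle Ax, x\rangle = |\langle Ax, x\rangle|$, and let $B_\theta = \tfrac{1}{2}(e^{-i\theta}A + e^{i\theta}A^*)$ be the real part of $e^{-i\theta}A$. Since $B_\theta$ is self-adjoint and satisfies $\langle B_\theta x, x\rangle = |\langle Ax, x\rangle|$, the problem reduces to estimating the real quantity $\langle B_\theta x, x\rangle^{2r}$.

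Next I would apply Lemma \ref{le2}(a) (or plain Cauchy--Schwarz) to obtain $\langle B_\theta x, x\rangle^2 \leq \langle B_\theta^2 x, x\rangle$, so that $|\langle Ax, x\rangle|^{2r} \leq \langle B_\theta^2 x, x\rangle^r$. Expanding $B_\theta^2$ produces a cross term proportional to $\mathrm{Re}\bigl(e^{-2i\theta}\langle A^2 x, x\rangle\bigr)$ and a diagonal term proportional to $\langle(AA^*+A^*A)x, x\rangle$; bounding the former by $|\langle A^2 x, x\rangle|$ and keeping track of the constants yields the intermediate estimate $\langle B_\theta^2 x, x\rangle \leq \tfrac{1}{2}\bigl(|\langle A^2 x, x\rangle| + \tfrac{1}{2}\langle(AA^*+A^*A)x, x\rangle\bigr)$, which already has the correct structure to match the right-hand side of the target.

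The final step is to raise this bound to the $r$-th power. This is where Lemma \ref{le1}(a) specialized to $\alpha = 1/2$ enters: it supplies the power-mean inequality $\bigl(\tfrac{a+b}{2}\bigr)^{r} \leq \tfrac{a^r + b^r}{2}$ for $r \geq 1$, which converts a power of an average into an average of powers and lets the two terms separate cleanly. Bounding $|\langle A^2 x, x\rangle|^r$ by $w^r(A^2)$ and $\bigl(\tfrac{1}{2}\langle(AA^*+A^*A)x, x\rangle\bigr)^r$ by $\|A\|^{2r}$, then taking the supremum over unit $x$, gives the claim. No single step is difficult; the only delicate book-keeping concerns the factor $\tfrac{1}{2}$, which is preserved precisely by the convexity inequality just invoked. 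A shorter one-line derivation is also available --- raise \eqref{4.1} directly to the $r$-th power and apply the same convexity step --- but the self-contained argument above more closely follows the strategy of \cite{5} cited by the authors.
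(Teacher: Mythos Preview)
Your argument is correct, but it follows a genuinely different route from the paper. The paper's proof rests on Dragomir's refinement of the Cauchy--Schwarz inequality,
\[
\|a\|\,\|b\| \geq |\langle a,b\rangle - \langle a,e\rangle\langle e,b\rangle| + |\langle a,e\rangle\langle e,b\rangle|,
\]
applied with $a=Ax$, $b=A^*x$, $e=x$, which yields the pointwise bound
$|\langle Ax,x\rangle|^2 \le \tfrac12\bigl(\|Ax\|\,\|A^*x\| + |\langle A^2x,x\rangle|\bigr)$.
Your approach instead passes through the self-adjoint operator $B_\theta=\mathrm{Re}(e^{-i\theta}A)$, uses $\langle B_\theta x,x\rangle^2\le\langle B_\theta^2 x,x\rangle$, and expands $B_\theta^2$ directly; this gives the analogous bound with $\|Ax\|\,\|A^*x\|$ replaced by the arithmetic mean $\tfrac12(\|Ax\|^2+\|A^*x\|^2)$, which is slightly larger by AM--GM but still dominated by $\|A\|^2$. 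Both proofs then finish with the same convexity step from Lemma~\ref{le1}(a). Your method is more self-contained (it needs only Cauchy--Schwarz, not the Buzano--Dragomir refinement), while the paper's method gives a marginally sharper intermediate inequality that is reused in Proposition~\ref{Prop2.5}. The one-line shortcut you mention at the end---raise \eqref{4.1} to the $r$th power and apply convexity---is also valid and is in fact the most economical derivation.
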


\begin{proof}
We recall the following refinement of the Cauchy--Schwarz inequality obtained by Dragomir in \cite{6}, see also . It says that
\begin{eqnarray} \label{7.1}
\| a \| \| b \| \geq | \langle a,b \rangle - \langle a,e \rangle \langle e, b \rangle | + | \langle a,e \rangle \langle e,b \rangle | \geq | \langle a,b \rangle |,
\end{eqnarray}
where $a,b,e$ are vectors in $\mathscr{H}$ and $ \| e \| =1$.\\
From inequality \eqref{7.1} we deduce that
\begin{equation*}
\frac{1}{2} \big( \| a \| \| b \| + | \langle a,b \rangle |) \geq | \langle a,e\rangle \langle e,b \rangle |.
\end{equation*}
Put $e= x$ with $\| x \| =1 ,~ a=Ax$ and $ b=A^* x $ in the above inequality and use Lemma \ref{le1} (a) to get
\begin{align*}
| \langle A x , x \rangle | ^2 \leq \frac{1}{2} \big( \| A x \| \| A^* x \|  + | \langle A^2 x,x \rangle |\big)\leq \left(
\frac{ \| A x \|^{r} \| A^* x \|^r + | \langle A^2 x, x \rangle | ^r} { 2} \right) ^{\frac{1}{r}},
\end{align*}
whence
\begin{eqnarray} \label{8.1}
| \langle A x,x \rangle | ^{2r} \leq \frac{1}{2} \big( \| Ax \| ^r \| A^* x \| ^r + | \langle A^2 x,x \rangle | ^r \big).
\end{eqnarray}
Taking the supremum over $ x \in \mathscr{H}$ with $\|x\|=1$ in inequality (\ref{8.1}) we obtain the desired inequality.
\end{proof}
The next result reads as follows.
\begin{proposition}\label{Prop2.5}
Let $A\in \mathbb{B}(\mathscr{H})$ and $f,g$ be nonnegative continuous functions on $[ 0, \infty) $ satisfying $f(t) g(t ) = t, \,\,   (t \geq 0)$. Then
\begin{eqnarray} \label{9.1}
w^{2r} (A) \leq \frac{1}{2} \left( \| A \| ^{2r} + \left\|\frac{1}{p} f^{pr} (|A^2|) +\frac{1}{q} g ^{qr} (|(A^2)^{*}|) \right\| \right).
\end{eqnarray}
 for all $r\geq 1$, $p\geq q>1$ with $\frac{1}{p}+\frac{1}{q}=1$ and $ qr \geq2$.
\end{proposition}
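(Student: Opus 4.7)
The plan is to reuse the first half of the argument that produced inequality \eqref{8.1} in the preceding theorem, and then treat the remaining inner-product term $|\langle A^2 x,x\rangle|^r$ via the mixed Schwarz and Young inequalities instead of bounding it crudely by $w^r(A^2)$. Starting from
\[
|\langle Ax,x\rangle|^{2r} \leq \tfrac{1}{2}\bigl(\|Ax\|^r\|A^*x\|^r + |\langle A^2 x,x\rangle|^r\bigr)
\]
valid for every unit vector $x$, the first summand is controlled by $\|A\|^{2r}$ since $\|Ax\|,\|A^*x\|\leq \|A\|$, so the whole task reduces to finding a manageable pointwise bound on $|\langle A^2 x,x\rangle|^r$ expressible as an inner product of a positive operator with $x$.

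For that, I would apply Lemma \ref{le3}(b) to the operator $A^2$ with $y=x$, obtaining $|\langle A^2 x,x\rangle| \leq \|f(|A^2|)x\|\cdot \|g(|(A^2)^*|)x\|$. Raising both sides to the $r$-th power and invoking Lemma \ref{le1}(b) with $a=\|f(|A^2|)x\|$ and $b=\|g(|(A^2)^*|)x\|$ yields
\[
|\langle A^2 x,x\rangle|^r \leq \tfrac{1}{p}\|f(|A^2|)x\|^{pr} + \tfrac{1}{q}\|g(|(A^2)^*|)x\|^{qr}.
\]
Rewriting each summand as $\langle f^{2}(|A^2|)x,x\rangle^{pr/2}$ and $\langle g^{2}(|(A^2)^*|)x,x\rangle^{qr/2}$ and applying the McCarty inequality (Lemma \ref{le2}(a)) to each factor gives
\[
|\langle A^2 x,x\rangle|^r \leq \Bigl\langle \bigl(\tfrac{1}{p}f^{pr}(|A^2|)+\tfrac{1}{q}g^{qr}(|(A^2)^*|)\bigr)x,x\Bigr\rangle.
\]

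Substituting this back into the estimate from \eqref{8.1} and taking the supremum over unit vectors $x$ then produces \eqref{9.1}, using that the operator appearing under the norm is positive (as a sum of positive operators with positive coefficients), so its numerical radius coincides with its operator norm. The only delicate point in the argument is the exponent bookkeeping: one must check that the combined hypotheses $p\geq q$ and $qr\geq 2$ are exactly what legitimize both applications of McCarty, since they force $pr/2 \geq qr/2 \geq 1$. Beyond that, the proof is a clean chain of Lemmas \ref{le1}, \ref{le2}, and \ref{le3} applied in sequence.
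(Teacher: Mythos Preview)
Your proposal is correct and follows essentially the same route as the paper: start from \eqref{8.1}, estimate $|\langle A^2 x,x\rangle|^r$ via Lemma~\ref{le3}(b), then Young (Lemma~\ref{le1}(b)), then McCarty (Lemma~\ref{le2}(a)), combine into a single inner product, and take the supremum. The only cosmetic difference is that the paper keeps $\|Ax\|^r\|A^*x\|^r$ inside the supremum until the last step, whereas you bound it pointwise by $\|A\|^{2r}$ first; both lead to the same inequality. (One small wording issue: when you invoke Lemma~\ref{le1}(b), the Young inequality is really being applied to $a^r$ and $b^r$, not to $a$ and $b$ themselves, but the displayed conclusion is correct.)
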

\begin{proof}
Let $ x \in \mathscr{H}$ be a unit vector. We have
\begin{align*}
| \langle A^2 x , x \rangle |^{r} & \quad \leq \| f (|A^2| ) x \|^{r} \| g( |(A^2)^{*}| ) x \|^{r} \quad \quad (\text{by Lemma \ref{le3} (b)} )\\
&\quad = \langle f^2 (|A^2| ) x, x \rangle ^{\frac{r}{2} }\langle g^2 (|(A^2)^{*}| ) x,x \rangle ^{\frac{r}{2}}\\
&\quad \leq \frac{1}{p} \langle f^2 (|A^2|) x,x \rangle ^{\frac{pr}{2}} +\frac{1}{q} \langle g^2 (|(A^2)^{*}|) x,x \rangle ^{\frac{qr}{2}}
 \quad\quad ( \text{by Lemma \ref{le1} (b)} )\\
& \quad \leq  \frac{1}{p}\langle f^{pr} (|A^2|) x,x \rangle +\frac{1}{q} \langle g^{qr} (|(A^2)^{*}|) x,x \rangle  \qquad\quad ( \text{by Lemma \ref{le2} (a)})\\
& = \left\langle \left(\frac{1}{p} f^{pr} (|A^2|) + \frac{1}{q} g^{qr} (|(A^2)^{*}|)\right) x,x \right\rangle.
\end{align*}
It follows from inequality \eqref{8.1} that
\begin{equation*}
| \langle A x , x \rangle |^{2r} \leq \frac{1}{2} \left( \| A x \| ^r \| A^* x \| ^r + \Big\langle \big(\frac{1}{p} f^{pr} (|A^2|) + \frac{1}{q} g^{qr} (|(A^2)^{*}|)\big) x,x \Big\rangle \right).
\end{equation*}
Taking the supremum over $ x \in \mathscr{H}$ with $\| x \| =1 $ in the above inequality we deduce the desired inequality \eqref{9.1}.
\end{proof}
Inequality \eqref{9.1} induces several numerical radius inequalities as special cases. For example the following result may be stated as well.

\begin{corollary}
If we take $f(t) = t^\alpha, g(t) = t^{1-\alpha}$ and $p=q=2$ in inequality \eqref{9.1}, then
\begin{equation*}
w^{2r} (A) \leq \frac{1}{2} \left( \| A \| ^{2r} +
\frac{1}{2} \| |A|^{4\alpha r} +|A^{*}|^{4(1-\alpha) r} \| \right)
\end{equation*}
for any $ r \geq1$ and $ 0 \leq \alpha \leq 1$.
\end{corollary}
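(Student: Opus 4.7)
The corollary is a direct specialization of Proposition \ref{Prop2.5}, so the plan is essentially a book-keeping exercise under the substitutions $f(t) = t^\alpha$, $g(t) = t^{1-\alpha}$, and $p = q = 2$. First I would verify all hypotheses of Proposition \ref{Prop2.5}: the compatibility condition $f(t)g(t) = t$ reduces to $t^\alpha t^{1-\alpha} = t$, valid on $[0,\infty)$ for $0 \leq \alpha \leq 1$; the H\"older-type constraints $p \geq q > 1$ and $\tfrac{1}{p} + \tfrac{1}{q} = 1$ hold trivially at $p = q = 2$; and the growth condition $qr \geq 2$ is exactly the standing hypothesis $r \geq 1$.

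Next I would compute the right-hand side of \eqref{9.1} under these choices. By the functional calculus for positive operators, $f^{pr}(|A^2|) = |A^2|^{2\alpha r}$ and $g^{qr}(|(A^2)^*|) = |(A^2)^*|^{2(1-\alpha)r}$. Since $\tfrac{1}{p} = \tfrac{1}{q} = \tfrac{1}{2}$, the operator inside the norm in \eqref{9.1} becomes $\tfrac{1}{2}\bigl(|A^2|^{2\alpha r} + |(A^2)^*|^{2(1-\alpha)r}\bigr)$, and pulling the scalar $\tfrac{1}{2}$ out of the norm produces the right-hand side of the stated inequality.

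The proof contains no real obstacle: every step is symbolic substitution and elementary functional calculus, so the main thing to be careful about is the parameter book-keeping (the exponents $2\alpha r$ and $2(1-\alpha)r$ each coming from a product of three factors: the exponent of $f$ or $g$, the Young exponent $p$ or $q$, and the outer power $r$). The only genuinely subtle point worth flagging is a notational one: direct substitution yields $|A^2|^{2\alpha r}$ and $|(A^2)^*|^{2(1-\alpha)r}$, whereas the statement is written with $|A|^{4\alpha r}$ and $|A^*|^{4(1-\alpha)r}$. These expressions agree whenever $|A^2| = |A|^2$, for instance when $A$ is normal, which appears to be the intended simplification in the final display.
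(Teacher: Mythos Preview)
Your approach is exactly what the paper intends: the corollary is stated without a separate proof, as an immediate specialization of Proposition~\ref{Prop2.5}, and your verification of the hypotheses and computation of the substituted right-hand side are correct. Your closing flag is also well-founded: direct substitution produces $|A^2|^{2\alpha r}$ and $|(A^2)^*|^{2(1-\alpha)r}$, which in general do \emph{not} coincide with $|A|^{4\alpha r}$ and $|A^*|^{4(1-\alpha)r}$ (since $|A^2|=|A|^2$ fails for non-normal $A$), so the printed form of the corollary appears to carry a typographical slip rather than reflecting any additional argument you are missing.
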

\noindent In addition, by choosing $ \alpha = \frac{1}{2}$ we get $w^{2r} (A) \leq \| A \| ^{2r}$ for any $r \geq 1$, which is a generalization of the second inequality in \eqref{1.1}.

An operator $A$ on a Hilbert space $\mathscr{H}$ is said to be a paranormal operator if
\[
\| Ax\| ^2\leq \| A^2x\|.
\]
for any unit vector $x\in \mathscr{H}.$\\
Therefore if $A  \in \mathbb{B}(\mathscr{H})$ be a paranormal, then we get
\[
\| A^*A\| \leq \| A^2\|.
\]
On making use of above inequality and power inequality for numerical radius, we have the next result.
\begin{corollary}
If  $A\in \mathbb{B}(\mathscr{H})$ is paranormal and $f,g$ be as in Proposition \ref{Prop2.5}, then
\begin{equation*}
w^r (A) \leq \frac{1}{2} \left( \| A \| ^r + \left\|\frac{1}{p} f^{pr} (|A|) +\frac{1}{q} g ^{qr} (|A^*|) \right\| \right).
\end{equation*}
for all $r\geq 1$, $p\geq q>1$ with $\frac{1}{p}+\frac{1}{q}=1$ and $ qr \geq2$.
\end{corollary}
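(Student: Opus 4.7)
The plan is to mimic the proof of Proposition \ref{Prop2.5} but apply the generalized mixed Schwarz inequality of Lemma \ref{le3}(b) directly to $A$ rather than to $A^2$. Concretely, for a unit vector $x\in\mathscr{H}$, the first step is
\[
|\langle Ax,x\rangle| \leq \|f(|A|)x\|\,\|g(|A^*|)x\|.
\]
Raising to the $r$th power and rewriting the right hand side as $\langle f^2(|A|)x,x\rangle^{r/2}\langle g^2(|A^*|)x,x\rangle^{r/2}$, I would next apply Young's inequality (Lemma \ref{le1}(b)) with exponents $p,q$ and then the McCarthy inequality (Lemma \ref{le2}(a)) with exponents $pr/2$ and $qr/2$---both of which are $\geq 1$, since $p\geq q$ and $qr\geq 2$ force $pr/2\geq qr/2\geq 1$. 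This yields the pointwise bound
\[
|\langle Ax,x\rangle|^r \leq \Big\langle\Big(\tfrac{1}{p}f^{pr}(|A|) + \tfrac{1}{q}g^{qr}(|A^*|)\Big)x, x\Big\rangle,
\]
and taking the supremum over unit $x$ produces $w^r(A) \leq \big\|\tfrac{1}{p}f^{pr}(|A|)+\tfrac{1}{q}g^{qr}(|A^*|)\big\|$.

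Averaging this bound with the classical estimate $w^r(A)\leq\|A\|^r$ coming from \eqref{1.1} gives the stated inequality. The paranormality of $A$ and the power inequality $w(A^n)\leq w^n(A)$ highlighted just before the corollary play a structural role: a paranormal operator satisfies $\|A^2\|=\|A^*A\|=\|A\|^2$ and $w(A^2)=w^2(A)$, so the conclusion of Proposition \ref{Prop2.5} may be viewed as the corollary with $A^2$ in place of $A$, confirming the internal consistency of the two results and motivating the transition from $|A^2|,|(A^2)^*|$ to $|A|,|A^*|$. The main obstacle I anticipate is purely bookkeeping: the hypothesis $qr\geq 2$ together with $p\geq q$ must be invoked precisely at the McCarthy step, once for each spectral factor, and no weaker assumption on $p,q,r$ appears to suffice.
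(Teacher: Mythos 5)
Your proof is correct, but it takes a genuinely different route from the paper's. The paper deduces the corollary from Proposition \ref{Prop2.5}: it invokes the power inequality $w(A^{2})\leq w^{2}(A)$ together with the paranormality relation $\|A\|^{2}=\|A^{*}A\|\leq\|A^{2}\|$, so that inequality \eqref{9.1} turns into a bound on $w^{r}(A^{2})$ in terms of $\|A^{2}\|^{r}$ and $\|\tfrac1p f^{pr}(|A^{2}|)+\tfrac1q g^{qr}(|(A^{2})^{*}|)\|$, and then formally relabels $A^{2}$ as $A$; there paranormality is used in an essential way, and the relabelling is the delicate point. You instead rerun the Schwarz--Young--McCarthy chain of Proposition \ref{Prop2.5} directly on $A$ rather than on $A^{2}$, obtaining the Kittaneh-type estimate $w^{r}(A)\leq\|\tfrac1p f^{pr}(|A|)+\tfrac1q g^{qr}(|A^{*}|)\|$, and then average it with $w^{r}(A)\leq\|A\|^{r}$. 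Every step checks out: Lemma \ref{le3}(b) with $y=x$, Lemma \ref{le1}(b), and Lemma \ref{le2}(a) applied with exponents $pr/2\geq qr/2\geq 1$, which is exactly where $p\geq q$ and $qr\geq 2$ are needed, as you observe. Notably, your argument never uses paranormality, so it proves the stated inequality for \emph{every} $A\in\mathbb{B}(\mathscr{H})$ --- a strictly more general conclusion --- at the price of exposing the displayed bound as merely the average of $\|A\|^{r}$ with your sharper intermediate estimate. Your side remarks that a paranormal operator satisfies $\|A^{2}\|=\|A\|^{2}$ and (via normaloidity) $w(A^{2})=w^{2}(A)$ are accurate and correctly explain why the paper's substitution $A^{2}\mapsto A$ is consistent, but they play no logical role in your derivation.
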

\begin{proof}
It is enough to use $w(A^2)\leq w^2(A)$ and then replacing $A^2$ by $A$ in the inequality \eqref{9.1}.
\end{proof}
The next result is an extension of \eqref{5.1} and is a kind of Young's inequality for operators. In fact in \cite{Mia}, the authors proved the following proposition:
\begin{proposition} \label{prop:8.2}
Suppose $A,B , X \in \mathbb{B}(\mathscr{H})$
and $f,g$ are nonnegative continuous functions on $[0, \infty) $ satisfying $f(t) g(t) =t $ for all $ t \geq 0$. Then
\begin{eqnarray} \label{12.1}
w^r( A X B ) \leq \big\| \frac{1}{p} \big[A f^2 (| X^{*} | ) A^{*} \big] ^{\frac{pr}{2}}
+\frac{1}{q}\big[ B^{*} g^2 (| X | ) B \big] ^{\frac{qr}{2}} \big\|
\end{eqnarray}
for all $r\geq 0$ and $p,q>1$ with
$\frac{1}{p}+\frac{1}{q}=1$ and $pr,qr\geq 2$.
\end{proposition}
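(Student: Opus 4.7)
The plan is to mirror the structure used in Proposition \ref{Prop2.5} but applied to the trilinear expression $\langle AXBx,x\rangle = \langle XBx, A^{*}x\rangle$, and then combine the generalized mixed Schwarz inequality, Young's inequality, and the McCarty inequality. Because the target features $Af^{2}(|X^{*}|)A^{*}$ (with $f$ evaluated on $|X^{*}|$), one should apply Lemma \ref{le3}(b) with the roles of $f$ and $g$ interchanged; this is harmless because the factorization condition $f(t)g(t)=t$ is symmetric.

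Concretely, for any unit vector $x\in\mathscr{H}$, I would first write
\[
|\langle AXBx, x\rangle| = |\langle X(Bx), A^{*}x\rangle|\leq \|g(|X|)Bx\|\,\|f(|X^{*}|)A^{*}x\|
\]
by Lemma \ref{le3}(b). Rewriting each factor as a square root of an inner product gives
\[
|\langle AXBx, x\rangle|^{r}\leq \langle B^{*}g^{2}(|X|)Bx,x\rangle^{r/2}\,\langle Af^{2}(|X^{*}|)A^{*}x,x\rangle^{r/2}.
\]
Then I would apply Young's inequality from Lemma \ref{le1}(b) with exponents $p,q$ (which is valid for $r\geq 0$ since both inner products are nonnegative) to obtain
\[
|\langle AXBx, x\rangle|^{r}\leq \frac{1}{p}\langle Af^{2}(|X^{*}|)A^{*}x,x\rangle^{pr/2} + \frac{1}{q}\langle B^{*}g^{2}(|X|)Bx,x\rangle^{qr/2}.
\]

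At this point I would invoke the hypothesis $pr,qr\geq 2$, which ensures $pr/2\geq 1$ and $qr/2\geq 1$, so that McCarty's inequality Lemma \ref{le2}(a) applies to the two positive operators $Af^{2}(|X^{*}|)A^{*}$ and $B^{*}g^{2}(|X|)B$, yielding
\[
|\langle AXBx, x\rangle|^{r}\leq \left\langle\left(\tfrac{1}{p}[Af^{2}(|X^{*}|)A^{*}]^{pr/2}+\tfrac{1}{q}[B^{*}g^{2}(|X|)B]^{qr/2}\right)x,x\right\rangle.
\]
Since the operator on the right is self-adjoint and positive, taking the supremum over unit vectors $x$ converts the left-hand side into $w^{r}(AXB)$ and the right-hand side into the operator norm of the self-adjoint operator in brackets, giving the claimed inequality \eqref{12.1}.

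The argument is fairly mechanical once the correct pairing of $f$ and $g$ with $|X|$ and $|X^{*}|$ is chosen; the only genuinely delicate point is the role of the hypothesis $pr,qr\geq 2$, which is needed precisely to justify the McCarty step on the last line, and the observation that the dual application of Lemma \ref{le3}(b) (swapping $f\leftrightarrow g$) is necessary to produce the operators in the order demanded by the conclusion.
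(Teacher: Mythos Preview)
Your proof is correct, but it takes a different route from the one presented in the paper. You argue directly: apply Lemma~\ref{le3}(b) to $\langle X(Bx),A^{*}x\rangle$ (with $f$ and $g$ swapped, as you note), then Young's inequality, then McCarty. This is essentially the method of the original reference~\cite{Mia} and of Proposition~\ref{Prop2.5}. The paper instead proves Proposition~\ref{prop:8.2} by \emph{reducing} it to the special case $X=I$, $f=g=\sqrt{t}$ (Proposition~\ref{cor: w(AB)}): writing the polar decomposition $X=U|X|$ and setting $S=f(|X|)U^{*}A^{*}$, $T=g(|X|)B$, one checks that $S^{*}T=AXB$, $|S|^{2}=Af^{2}(|X^{*}|)A^{*}$, and $|T|^{2}=B^{*}g^{2}(|X|)B$, so Proposition~\ref{cor: w(AB)} applied to $S,T$ yields \eqref{12.1} immediately. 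Your direct argument has the advantage of being self-contained and making transparent exactly where the hypothesis $pr,qr\geq 2$ enters; the paper's argument is shorter and, more importantly, establishes that the apparently more general Proposition~\ref{prop:8.2} is in fact equivalent to its own special case, which is the point the authors wish to make.
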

They then deduce the following result:
\begin{proposition}\label{cor: w(AB)}
Let $A,B\in B(\mathscr{H})$. Then
\begin{align*}
w^{r}(B^{*}A) & \leq  \| \frac{1}{p} |A|^{pr}+\frac{1}{q}|B|^{qr}\|
\end{align*}
holds for all $r\geq 0$ and $p,q>1$ with
$\frac{1}{p}+\frac{1}{q}=1$ and $pr, qr\geq 2$.
\end{proposition}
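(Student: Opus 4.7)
The plan is to obtain this as a direct specialization of Proposition \ref{prop:8.2}. The key observation is that setting the middle operator $X$ equal to the identity reduces $w^r(AXB)$ to $w^r(AB)$, an inequality about the product of two operators, which is exactly the form we want.

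First I would apply Proposition \ref{prop:8.2} with $X = I_{\mathscr{H}}$ and with the functions $f(t)=g(t)=t^{1/2}$. These are nonnegative and continuous on $[0,\infty)$ and satisfy $f(t)g(t)=t$, so the hypotheses are met. Because $|X|=|X^{*}|=I$, we obtain $f^{2}(|X^{*}|)=g^{2}(|X|)=I$, and consequently
\[
Af^{2}(|X^{*}|)A^{*}=AA^{*}=|A^{*}|^{2},\qquad B^{*}g^{2}(|X|)B=B^{*}B=|B|^{2}.
\]
Substituting these into \eqref{12.1} yields
\[
w^{r}(AB)\leq \left\|\frac{1}{p}(AA^{*})^{pr/2}+\frac{1}{q}(B^{*}B)^{qr/2}\right\|
=\left\|\frac{1}{p}|A^{*}|^{pr}+\frac{1}{q}|B|^{qr}\right\|.
\]

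Next, to land on the exact form claimed, I would relabel operators by replacing $A$ with $B^{*}$ and $B$ with $A$; the product $AB$ then becomes $B^{*}A$, $|A^{*}|$ becomes $|B|$, and $|B|$ becomes $|A|$, so the inequality reads
\[
w^{r}(B^{*}A)\leq \left\|\frac{1}{p}|B|^{pr}+\frac{1}{q}|A|^{qr}\right\|.
\]
The statement has $p$ and $q$ interchanged relative to this expression. Since the hypotheses $\tfrac{1}{p}+\tfrac{1}{q}=1$ and $pr,qr\geq 2$ are symmetric under $p\leftrightarrow q$, the same argument applied with the roles of $p$ and $q$ swapped (equivalently, the alternative labelling $A\mapsto A$, $B\mapsto B^{*}$ combined with $w(T)=w(T^{*})$ applied to $T=A^{*}B$) yields the displayed inequality.

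I do not anticipate a genuine obstacle here: the entire content of the proof is the substitution $X=I$ into \eqref{12.1} together with the canonical factorization $t=t^{1/2}\cdot t^{1/2}$. The only bookkeeping point that requires a brief justification is matching the order of the two terms on the right-hand side to the form stated in the proposition, and this is exactly the $p\leftrightarrow q$ symmetry remarked on above.
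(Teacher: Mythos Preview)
Your proposal is correct and follows exactly the approach indicated in the paper: the authors state in one line that Proposition~\ref{cor: w(AB)} is obtained by putting $X=I$ and $f(t)=g(t)=\sqrt{t}$ in Proposition~\ref{prop:8.2}, which is precisely what you do. Your additional bookkeeping (the relabeling $A\mapsto B^{*}$, $B\mapsto A$ and the $p\leftrightarrow q$ swap) just makes explicit the variable matching that the paper leaves to the reader.
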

Proposition \ref{cor: w(AB)} is given by putting
$X=I$ and $f(t)=g(t)=\sqrt{t}$ in Proposition
\ref{prop:8.2}.
Now we show that Propositions \ref{prop:8.2} and
\ref{cor: w(AB)} are equivalent.
To see this, we have only to prove Proposition \ref{prop:8.2}
from Proposition \ref{cor: w(AB)}.

\begin{proof}[Proof of Proposition \ref{prop:8.2}]
Let $X=U|X|$ be the polar decomposition of $X$.
Put $S=f(|X|)U^{*}A^{*}$ and $T=g(|X|)B$.
Then by Proposition \ref{cor: w(AB)}, we have
\begin{align*}
& w^{r}(S^{*}T)  \leq  \| \frac{1}{p} |S|^{pr}+\frac{1}{q}|T|^{qr}\| \\
\Longleftrightarrow
& \quad
w^r( A X B ) \leq \big\| \frac{1}{p} \big[A f^2 (| X^{*} | ) A^{*} \big] ^{\frac{pr}{2}} +
\frac{1}{q}\big[ B^{*} g^2 (| X | ) B \big] ^{\frac{qr}{2}} \big\|.
\end{align*}
\end{proof}

The following theorem gives
an upper bound for $w(B^{*}A)$.

\begin{theorem}\label{thm: w(AB) and w(BA)}
Let $A,B\in \mathbb{B}(\mathscr{H})$. Then
\begin{align*}
w^{r}(B^{*}A) & \leq \frac{1}{4} \| (AA^{*})^{r}+(BB^{*})^{r}\| +
\frac{1}{2}w^{r}(AB^{*})
\end{align*}
for all $r\geq 1$.
\end{theorem}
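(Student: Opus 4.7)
The strategy is to lift the problem to the $2\times 2$ operator matrix
\[
\widetilde{M} = \begin{pmatrix} 0 & A \\ B^{*} & 0 \end{pmatrix}\in\mathbb{B}(\mathscr{H}\oplus\mathscr{H}),
\]
and to combine the power inequality $w(T^{2})\leq w(T)^{2}$ with a couple of convexity estimates. A direct computation gives $\widetilde{M}^{2} = \begin{pmatrix} AB^{*} & 0 \\ 0 & B^{*}A \end{pmatrix}$, which is block diagonal, so $w(\widetilde{M}^{2})=\max\{w(AB^{*}),w(B^{*}A)\}\geq w(B^{*}A)$. The power inequality, raised to the $r$-th power, therefore yields $w^{r}(B^{*}A)\leq w(\widetilde{M})^{2r}$ for every $r\geq 1$.

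I evaluate $w(\widetilde{M})^{2r}$ via the well-known formula $w(T)=\tfrac{1}{2}\sup_{\theta\in\mathbb{R}}\|T+e^{i\theta}T^{*}\|$. The operator $\widetilde{M}+e^{i\theta}\widetilde{M}^{*}$ is off-diagonal with upper-right block $A+e^{i\theta}B$ and lower-left block $B^{*}+e^{i\theta}A^{*}=e^{i\theta}(A+e^{i\theta}B)^{*}$; both blocks therefore have norm $\|A+e^{i\theta}B\|$, and since the norm of an off-diagonal $2\times 2$ block operator is the larger of the two block norms, we conclude that $2w(\widetilde{M})=\sup_{\theta}\|A+e^{i\theta}B\|$. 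Expanding
\[
\|A+e^{i\theta}B\|^{2}=\|(A+e^{i\theta}B)(A+e^{i\theta}B)^{*}\|=\|AA^{*}+BB^{*}+e^{-i\theta}AB^{*}+e^{i\theta}BA^{*}\|,
\]
the triangle inequality together with $\|T+e^{i\phi}T^{*}\|\leq 2w(T)$ (applied to $T=AB^{*}$) gives $\|A+e^{i\theta}B\|^{2}\leq\|AA^{*}+BB^{*}\|+2w(AB^{*})$. This already settles the case $r=1$.

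For general $r\geq 1$, two scalar convexity steps complete the argument. Raising the preceding inequality to the $r$-th power and applying $(a+b)^{r}\leq 2^{r-1}(a^{r}+b^{r})$ gives
\[
\|A+e^{i\theta}B\|^{2r}\leq 2^{r-1}\|AA^{*}+BB^{*}\|^{r}+2^{2r-1}w^{r}(AB^{*}).
\]
The step I expect to be the main obstacle is converting $\|AA^{*}+BB^{*}\|^{r}$ into $\|(AA^{*})^{r}+(BB^{*})^{r}\|$, and for this I will establish the auxiliary inequality
\[
\|P+Q\|^{r}\leq 2^{r-1}\|P^{r}+Q^{r}\| \qquad (P,Q\geq 0,\ r\geq 1).
\]
Its proof is short: for a unit vector $x$ nearly maximising $\langle (P+Q)x,x\rangle$, set $p=\langle Px,x\rangle$ and $q=\langle Qx,x\rangle$, so that $\|P+Q\|^{r}\approx (p+q)^{r}\leq 2^{r-1}(p^{r}+q^{r})\leq 2^{r-1}\langle(P^{r}+Q^{r})x,x\rangle\leq 2^{r-1}\|P^{r}+Q^{r}\|$, where the middle step uses McCarthy's inequality (Lemma \ref{le2}(a)). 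Substituting this with $P=AA^{*}$ and $Q=BB^{*}$, then taking the supremum over $\theta$ and dividing by $4^{r}$ (as dictated by $w^{r}(B^{*}A)\leq 4^{-r}\sup_{\theta}\|A+e^{i\theta}B\|^{2r}$), the resulting factor $2^{r-1}\cdot 2^{r-1}/4^{r}=\tfrac{1}{4}$ and $2^{2r-1}/4^{r}=\tfrac{1}{2}$ produce exactly the coefficients of the theorem.
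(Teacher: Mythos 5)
Your proof is correct, and it reaches the theorem by a genuinely different route from the paper's. The paper works pointwise: it uses $w(T)=\sup_{\theta}\|\mathrm{Re}(e^{i\theta}T)\|$ together with the polarization identity to bound $\mathrm{Re}\langle e^{i\theta}B^{*}Ax,x\rangle$ by $\tfrac{1}{4}\|e^{i\theta}A+B\|^{2}$, whereas you reach the same intermediate bound $w(B^{*}A)\leq\tfrac{1}{4}\sup_{\theta}\|A+e^{i\theta}B\|^{2}$ by embedding $A$ and $B^{*}$ into the off-diagonal block operator $\widetilde{M}$, squaring it, and invoking the power inequality $w(\widetilde{M}^{2})\leq w(\widetilde{M})^{2}$ plus the norm identity $w(T)=\tfrac{1}{2}\sup_{\theta}\|T+e^{i\theta}T^{*}\|$ (which is the result of \cite{Y}, one of the paper's own references); from that point on the expansion of $\|A+e^{i\theta}B\|^{2}$ and the splitting into $\|AA^{*}+BB^{*}\|+2w(AB^{*})$ coincide with the paper's. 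The treatment of the exponent $r$ also differs in a small but real way: the paper converts $\left\|\tfrac{AA^{*}+BB^{*}}{2}\right\|^{r}$ into $\left\|\tfrac{(AA^{*})^{r}+(BB^{*})^{r}}{2}\right\|$ via the operator concavity of $t^{1/r}$, while you prove the equivalent scalar-level estimate $\|P+Q\|^{r}\leq 2^{r-1}\|P^{r}+Q^{r}\|$ directly from McCarthy's inequality (Lemma \ref{le2}(a)), which is more elementary since it avoids any appeal to operator monotonicity/concavity. What your approach buys is a proof assembled entirely from standard numerical-radius machinery (block matrices, the power inequality, Yamazaki's formula, McCarthy); what the paper's buys is a self-contained argument at the level of inner products that does not need the power inequality or the crossed norm formula. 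Your coefficient bookkeeping ($2^{r-1}\cdot 2^{r-1}/4^{r}=\tfrac14$ and $2^{2r-1}/4^{r}=\tfrac12$) checks out, and the near-maximising-vector step can be made rigorous by the obvious $\varepsilon$-argument, so there is no gap.
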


By Theorem \ref{thm: w(AB) and w(BA)} and inequality \eqref{5.1}, we have

\begin{align*}
w^{r}(B^{*}A)  \leq \frac{1}{4} \| (AA^{*})^{r}+(BB^{*})^{r}\| +\frac{1}{2}w^{r}(AB^{*}) \leq \frac{1}{2}\| (AA^{*})^{r}+(BB^{*})^{r}\|.
\end{align*}
Hence if both $A$ and $B$ are normal operators, then Theorem \ref{thm: w(AB) and w(BA)}
is a sharper inequality than \eqref{5.1}.
To prove Theorem \ref{thm: w(AB) and w(BA)}, we use the famous polarization identity as follows:
\begin{equation}
 \langle x,y\rangle =\frac{1}{4}\sum_{k=0}^{3}
\|x+i^{k}y\|^{2}i^{k}
\label{polarization identity}
\end{equation}
holds for all $x,y \in \mathscr{H}$.

\begin{proof}[Proof of Theorem \ref{thm: w(AB) and w(BA)}]
First of all, we note that
\begin{equation}
 w(T) =\sup_{\theta\in \mathbb{R}} \| \mbox{Re}(e^{i\theta}T)\|,
\label{eq:real part}
\end{equation}
where $\mbox{Re}X$ means the real part of an operator
$X$, i.e., $\mbox{Re}X=\frac{X+X^{*}}{2}$.
Because
$$ | \langle Tx,x\rangle |= \sup_{\theta\in \mathbb{R}} \mbox{Re} \{e^{i\theta}
\langle Tx,x\rangle\} $$
and
$$ \sup_{\theta\in \mathbb{R}}
\| \mbox{Re} (e^{i\theta}T)\| =
\sup_{\theta\in \mathbb{R}}w(\mbox{Re} (e^{i\theta}T))=
w(T). $$

\medskip

For a unit vector $x\in \mathscr{H}$,
we have
\begin{align*}
 \mbox{Re}\langle e^{i\theta}B^{*}A x,x\rangle
&
 = \mbox{Re}\langle e^{i\theta}A x, B x\rangle \\
&
= \frac{1}{4} \| (e^{i\theta}A+B)x\|^{2}-
\frac{1}{4} \| (e^{i\theta}A-B)x\|^{2}
& \mbox{(by \eqref{polarization identity})}\\
&
\leq  \frac{1}{4} \| (e^{i\theta}A+B)x\|^{2}\\
&
\leq  \frac{1}{4} \| e^{i\theta}A+B\|^{2}\\
& = \frac{1}{4} \| e^{-i\theta}A^{*}+B^{*}\|^{2}
& \mbox{(by $\|X^{*}\|=\|X\|$)} \\
&
= \frac{1}{4} \| (e^{-i\theta}A^{*}+B^{*})^{*}(e^{-i\theta}A^{*}+B^{*})\|
& \mbox{(by $\|X\|^{2}=\|X^{*}X\|$)}\\
&
= \frac{1}{4} \| AA^{*}+BB^{*}+
e^{i\theta}AB^{*}+e^{-i\theta}BA^{*}\| \\
&
\leq
 \frac{1}{4} \| AA^{*}+BB^{*} \|+
\frac{1}{2}\|\mbox{Re} (e^{i\theta}AB^{*})\| \\
&
\leq
 \frac{1}{4} \| AA^{*}+BB^{*} \|+
\frac{1}{2} w(AB^{*}) &\mbox{(by \eqref{eq:real part})}.
\end{align*}
Now taking the supremum over $x \in \mathscr{H}$ with $\|x\|=1$ in the above inequality produces
$$ w(B^{*}A)\leq \frac{1}{4}\| AA^{*}+BB^{*}\| +\frac{1}{2}w(AB^{*}).$$

For $r\geq 1$, since $t^{r}$ and $t^{\frac{1}{r}}$
are convex and operator concave functions, respectively, we have
\begin{align*}
w^{r}(B^{*}A)
& \leq
\left( \frac{1}{2} \left\|\frac{AA^{*}+BB^{*}}{2}\right\| +\frac{1}{2}w(AB^{*})\right)^{r}\\
& \leq
\frac{1}{2} \left\|\frac{AA^{*}+BB^{*}}{2}\right\|^{r} +\frac{1}{2}w^{r}(AB^{*})\\
& \leq
\frac{1}{2} \left\|\left(\frac{(AA^{*})^{r}+(BB^{*})^{r}}{2}\right)^{\frac{1}{r}}\right\|^{r} +\frac{1}{2}w^{r}(AB^{*})\\
& =
\frac{1}{2} \left\|\frac{(AA^{*})^{r}+(BB^{*})^{r}}{2}\right\| +\frac{1}{2}w^{r}(AB^{*}).
\end{align*}
Hence Theorem \ref{thm: w(AB) and w(BA)} is proven.
\end{proof}

The next corollary is an extension of an inequality
shown in \cite[Theorem 2.1]{Y}.

\begin{corollary}\label{Cor2.11}
Let $T\in B(\mathcal{H})$ and $T=U|T|$ be the
polar decomposition of $T$, and let
$\tilde{T}(\alpha)=|T|^{\alpha}U|T|^{1-\alpha}$ be the
generalized Aluthge transformation of $T$. Then we have
$$ w^{r}(T)\leq \frac{1}{4} \|
|T|^{2r\alpha}+|T|^{2r(1-\alpha)}\|+
\frac{1}{2}w^{r}(\tilde{T}(\alpha)) $$
holds for $r\geq 1$.
\end{corollary}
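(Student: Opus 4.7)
My plan is to reduce the corollary to Theorem~\ref{thm: w(AB) and w(BA)} by factoring $T$ so that the three ingredients $(AA^{*})^{r}$, $(BB^{*})^{r}$ and $AB^{*}$ on the right-hand side of that theorem specialize to $|T|^{2r\alpha}$, $|T|^{2r(1-\alpha)}$ and $\tilde{T}(\alpha)$, respectively. The polar decomposition $T=U|T|=U|T|^{1-\alpha}|T|^{\alpha}$ suggests the ansatz
\[
A=|T|^{\alpha},\qquad B=|T|^{1-\alpha}U^{*},
\]
so that $B^{*}=U|T|^{1-\alpha}$ and consequently $B^{*}A=U|T|^{1-\alpha}|T|^{\alpha}=U|T|=T$, giving $w^{r}(T)=w^{r}(B^{*}A)$.

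The remaining ingredients fall out directly. Since $A$ is self-adjoint, $AA^{*}=|T|^{2\alpha}$ and therefore $(AA^{*})^{r}=|T|^{2r\alpha}$; the cross-product $AB^{*}=|T|^{\alpha}U|T|^{1-\alpha}$ is by definition the generalized Aluthge transform $\tilde{T}(\alpha)$; and
\[
BB^{*}=|T|^{1-\alpha}\,U^{*}U\,|T|^{1-\alpha}=|T|^{2(1-\alpha)},
\]
where the last equality uses the standard polar-decomposition identity that $U^{*}U$ is the orthogonal projection onto $\overline{\operatorname{ran}|T|}=(\ker|T|)^{\perp}$, so that $|T|^{1-\alpha}U^{*}U=|T|^{1-\alpha}$. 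Substituting these identifications into Theorem~\ref{thm: w(AB) and w(BA)} produces
\[
w^{r}(T)\leq\frac{1}{4}\bigl\||T|^{2r\alpha}+|T|^{2r(1-\alpha)}\bigr\|+\frac{1}{2}w^{r}(\tilde{T}(\alpha)),
\]
which is exactly the asserted bound.

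I do not foresee any real obstacle, since once the factorization $T=(U|T|^{1-\alpha})\cdot|T|^{\alpha}$ is spotted the remainder is a mechanical substitution into an already-established inequality. The only bookkeeping point is the appearance of $U^{*}U$ inside $BB^{*}$ when $T$ has nontrivial kernel, which is handled by the one-line observation recorded above together with the usual convention $|T|^{0}:=I$ at the endpoints $\alpha\in\{0,1\}$.
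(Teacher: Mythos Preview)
Your proof is correct and follows exactly the same route as the paper: set $A=|T|^{\alpha}$, $B=|T|^{1-\alpha}U^{*}$ in Theorem~\ref{thm: w(AB) and w(BA)} and read off the result. In fact you are slightly more careful than the paper, which silently passes over the $U^{*}U$ issue in $BB^{*}$ that you explicitly justify via the range projection property of the polar decomposition.
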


\begin{proof}
Put $A=|T|^{\alpha}$ and $B=|T|^{1-\alpha}U^{*}$ in Theorem \ref{thm: w(AB) and w(BA)}. Then we have
\begin{align*}
& w^{r}(B^{*}A)  \leq \frac{1}{4} \| |A^{*}|^{2r}+|B^{*}|^{2r}\| +
\frac{1}{2}w^{r}(AB^{*})\\
\Longleftrightarrow
&  \quad
w^{r}(T)  \leq \frac{1}{4} \| |T|^{2r\alpha}+|T|^{2r(1-\alpha)}\| +
\frac{1}{2}w^{r}(\tilde{T}(\alpha)).
\end{align*}
\end{proof}

\section{Numerical Radius Inequalities for Product of operators} \label{eq33}

The main purpose of this section is to find upper bounds for $A^\alpha X B^\alpha$ and $ A^\alpha X B^{1-\alpha}$ for the case when $0 \leq \alpha \leq 1$.
Also we find a numerical radius inequality for Heinz means.

The following theorem gives us a new bound for powers of the numerical radius.

\begin{theorem}\label{thm:3.1}
Suppose $A, B , X \in \mathbb{B}(\mathscr{H})$ such that $A, B $ are positive. Then
\begin{equation*}
w^{r} (A^\alpha X B^\alpha ) \leq \| X \| ^{r} \| \frac{1}{p} A^{pr} + \frac{1}{q} B ^{qr} \|^\alpha
\end{equation*}
for all $ 0 \leq \alpha \leq 1$, $ r \geq 0$ and $p,q>1$
with $\frac{1}{p}+\frac{1}{q}=1$
and $pr,qr\geq 2$.
\end{theorem}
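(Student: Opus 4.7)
\medskip

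My plan is to reduce everything to a pointwise estimate on $|\langle A^\alpha X B^\alpha x,x\rangle|$ for a unit vector $x$, then apply the three lemmas from Section~2 in a carefully chosen order. The starting point is the Cauchy--Schwarz inequality together with the trivial bound $\|Xu\|\le \|X\|\|u\|$, which gives
$$|\langle A^\alpha X B^\alpha x,x\rangle|=|\langle XB^\alpha x, A^\alpha x\rangle|\le \|X\|\,\|A^\alpha x\|\,\|B^\alpha x\|.$$
Rewriting the norms via positivity of $A$ and $B$ as $\|A^\alpha x\|^2=\langle A^{2\alpha}x,x\rangle$ and similarly for $B$, and raising both sides to the $r$-th power, I obtain
$$|\langle A^\alpha X B^\alpha x,x\rangle|^r\le \|X\|^r\,\langle A^{2\alpha}x,x\rangle^{r/2}\langle B^{2\alpha}x,x\rangle^{r/2}.$$

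The next step is to re-express the inner products on the right in terms of $A^{pr}$ and $B^{qr}$, since those are what appears in the target inequality. Because $pr\ge 2$ and $\alpha\le 1$, the exponent $s:=\frac{2\alpha}{pr}$ lies in $[0,1]$, so Lemma~\ref{le2}(b) applied to the positive operator $A^{pr}$ yields
$$\langle A^{2\alpha}x,x\rangle=\langle (A^{pr})^{s}x,x\rangle\le \langle A^{pr}x,x\rangle^{2\alpha/(pr)},$$
and similarly $\langle B^{2\alpha}x,x\rangle\le \langle B^{qr}x,x\rangle^{2\alpha/(qr)}$ using $qr\ge 2$. Multiplying the resulting exponents by $r/2$ gives
$$|\langle A^\alpha X B^\alpha x,x\rangle|^r\le \|X\|^r\,\langle A^{pr}x,x\rangle^{\alpha/p}\langle B^{qr}x,x\rangle^{\alpha/q}.$$

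At this stage I factor out the common exponent $\alpha$ and apply the classical Young inequality from Lemma~\ref{le1}(b) to the pair $\bigl(\langle A^{pr}x,x\rangle^{1/p},\langle B^{qr}x,x\rangle^{1/q}\bigr)$, which produces
$$\langle A^{pr}x,x\rangle^{1/p}\langle B^{qr}x,x\rangle^{1/q}\le \frac{1}{p}\langle A^{pr}x,x\rangle+\frac{1}{q}\langle B^{qr}x,x\rangle=\Bigl\langle \bigl(\tfrac{1}{p}A^{pr}+\tfrac{1}{q}B^{qr}\bigr)x,x\Bigr\rangle.$$
Raising this to the power $\alpha\in[0,1]$ and using monotonicity of $t\mapsto t^\alpha$ on $[0,\infty)$ keeps the inequality intact, so
$$|\langle A^\alpha X B^\alpha x,x\rangle|^r\le \|X\|^r\,\Bigl\langle \bigl(\tfrac{1}{p}A^{pr}+\tfrac{1}{q}B^{qr}\bigr)x,x\Bigr\rangle^\alpha.$$

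Finally, I take the supremum over unit vectors $x$. Since the operator $\tfrac{1}{p}A^{pr}+\tfrac{1}{q}B^{qr}$ is positive, its numerical radius equals its operator norm, and the function $t\mapsto t^\alpha$ is continuous, so the supremum passes inside the $\alpha$-th power. This yields exactly the stated bound. The only delicate point is the bookkeeping of exponents in the McCarty step: the hypothesis $pr,qr\ge 2$ is used precisely to guarantee $2\alpha/(pr),2\alpha/(qr)\in[0,1]$, which is the admissible range for Lemma~\ref{le2}(b); everything else is a routine chain of Cauchy--Schwarz, McCarty, and Young.
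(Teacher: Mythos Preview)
Your proof is correct and follows essentially the same approach as the paper: Cauchy--Schwarz, the McCarty inequality, and Young's inequality, ending with a supremum over unit vectors. The only cosmetic difference is the order---the paper applies Young first, then McCarty, then the concavity of $t\mapsto t^\alpha$, whereas you apply McCarty first, factor out the exponent $\alpha$, and then apply Young (which lets you replace the concavity step by mere monotonicity of $t\mapsto t^\alpha$); both routes land on the same pointwise bound $\|X\|^r\langle(\tfrac{1}{p}A^{pr}+\tfrac{1}{q}B^{qr})x,x\rangle^\alpha$.
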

\begin{proof}
For any unit vector $x \in \mathscr{H}$ and by the Cauchy--Schwarz  inequality we have
\begin{align*}
| \langle A^{\alpha} XB^{\alpha} x, x \rangle |^{r}
& = | \langle X B^{\alpha} x, A^{\alpha} x \rangle |^{r} \\
& \leq \| X B^{\alpha} x \| ^{r} \| A^{\alpha} x \| ^{r} \\
& \leq \| X \| ^{r} \langle A^{2\alpha } x,x \rangle^{\frac{r}{2}} \langle B^{2\alpha} x,x \rangle^{\frac{r}{2}} \\
& \leq \| X \| ^{r}\left(
\frac{1}{p} \langle A^{2\alpha} x,x \rangle^{\frac{pr}{2}}+
\frac{1}{q} \langle B^{2\alpha} x,x \rangle ^{\frac{qr}{2}} \right)
& (\text{by Lemma \ref{le1} (b)})\\
& \leq
\| X \|^{r}
\left(\frac{1}{p}\langle A^{pr} x,x \rangle ^{\alpha} +
\frac{1}{q} \langle B^{qr} x,x \rangle ^{\alpha} \right)
& \mbox{(by Lemma \ref{le2} )}\\
& \leq
\| X \|^{r}
\left(\frac{1}{p}\langle A^{pr} x,x \rangle  +
\frac{1}{q} \langle B^{qr} x,x \rangle  \right)^{\alpha}
& \mbox{(by the concavity of $t^{\alpha}$)}\\
& =
\| X \|^{r}
\langle
\left(\frac{1}{p} A^{pr}   +
\frac{1}{q} B^{qr}\right) x,x \rangle^{\alpha}.
\end{align*}
Now by taking the supremum over $x \in \mathscr{H}$ with $\|x\|=1$ in the above inequality we infer that
Theorem \ref{thm:3.1}.
\end{proof}

\begin{corollary}
Let $\tilde{T}=|T|^{\frac12}U|T|^{\frac12}$ be the
 Aluthge transformation of $T$ such that $U$ is partial isometry. Then
\begin{equation*}
w(\tilde{T})\leq \| T \|.
\end{equation*}
\end{corollary}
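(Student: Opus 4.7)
The plan is to reduce the corollary to a direct application of Theorem \ref{thm:3.1}. Observing that the Aluthge transformation $\tilde T = |T|^{1/2}U|T|^{1/2}$ has exactly the shape $A^\alpha X B^\alpha$ with $A = B = |T|$, $X = U$, and $\alpha = 1/2$, I would apply Theorem \ref{thm:3.1} with the parameters $r = 1$ and $p = q = 2$. These values satisfy the hypotheses $\tfrac{1}{p}+\tfrac{1}{q}=1$ and $pr,qr\geq 2$, and the positivity requirement on $A$ and $B$ holds automatically since $|T|$ is positive.

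With these substitutions, Theorem \ref{thm:3.1} yields
\begin{equation*}
w(\tilde T) \;=\; w\!\left(|T|^{1/2}\, U\, |T|^{1/2}\right) \;\leq\; \|U\|\,\left\|\tfrac{1}{2}|T|^{2} + \tfrac{1}{2}|T|^{2}\right\|^{1/2}.
\end{equation*}
The right-hand side then needs to be simplified using two elementary facts. First, since $U$ is a partial isometry, $\|U\| \leq 1$. Second, the convex combination inside the norm collapses to $|T|^{2}$, and $\||T|^{2}\| = \|T\|^{2}$, so the square root is exactly $\|T\|$. Combining these observations gives $w(\tilde T) \leq \|T\|$, which is the desired conclusion.

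I do not anticipate a genuine obstacle here: the corollary is essentially an unpacking of Theorem \ref{thm:3.1} at the specific parameter values $(\alpha,r,p,q)=(1/2,1,2,2)$, together with the standard normalization properties of the polar decomposition. The only point that merits a brief remark in the write-up is the verification $\|U\|\leq 1$, which follows from the definition of a partial isometry (equivalently, $U^{*}U$ is a projection), and the identity $\||T|^{2}\|=\|T\|^{2}$, which follows from the $C^{*}$-identity applied to $T$.
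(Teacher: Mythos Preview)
Your proposal is correct and follows exactly the paper's approach: apply Theorem \ref{thm:3.1} with $A=B=|T|$, $X=U$, $\alpha=\tfrac12$, $r=1$, $p=q=2$, and simplify. Your write-up is in fact slightly more explicit than the paper's, since you spell out the use of $\|U\|\leq 1$ (the paper silently absorbs this factor) and the identity $\||T|^{2}\|=\|T\|^{2}$.
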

\begin{proof}
If we take $r=1$, $\alpha =\frac12$, $p=q=2$, $A=B=|T| $ and $X=U$ in Theorem \ref{thm:3.1}, then
\[
w(\tilde{T})\leq \|\frac12|T|^2+\frac12|T|^2 \|^{\frac12}= \||T|^2 \|^{\frac12}= \| T \|.
\]
\end{proof}

Our next result  is to find an upper bound for power of the numerical radius of $A^\alpha X B^{1-\alpha} $ under assumption $0\leq \alpha \leq 1$.

\begin{theorem}\label{thm:3.3}
Suppose $A, B, X \in \mathbb{B}(\mathscr{H})$ such that $A$, $B$ are positive. Then
\begin{equation*}
w^{r} (A^\alpha X B^{1-\alpha}) \leq \|X\|^{r} \|\alpha A^r +(1-\alpha)B^r \|
\end{equation*}
for all $r\geq 2$ and $0\leq \alpha \leq 1$.
\end{theorem}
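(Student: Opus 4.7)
The plan is to parallel the proof of Theorem \ref{thm:3.1} but tailor the McCarty step to the exponents $2\alpha$ and $2(1-\alpha)$. The whole estimate will be carried out inside $|\langle A^{\alpha}XB^{1-\alpha}x,x\rangle|^{r}$ for a unit vector $x$, and the supremum over such $x$ will be taken at the very end.

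First I would swap $A^{\alpha}$ across the inner product and apply the Cauchy--Schwarz inequality together with $\|XB^{1-\alpha}x\|\leq \|X\|\,\|B^{1-\alpha}x\|$ to obtain
\begin{equation*}
|\langle A^{\alpha}XB^{1-\alpha}x,x\rangle|^{r}\leq \|X\|^{r}\langle A^{2\alpha}x,x\rangle^{r/2}\langle B^{2(1-\alpha)}x,x\rangle^{r/2}.
\end{equation*}
Because $r\geq 2$ and $0\leq\alpha\leq 1$, both exponents $\frac{2\alpha}{r}$ and $\frac{2(1-\alpha)}{r}$ lie in $[0,1]$, so McCarty's inequality in the form of Lemma \ref{le2}(b), applied to the positive operators $A^{r}$ and $B^{r}$, yields $\langle A^{2\alpha}x,x\rangle\leq \langle A^{r}x,x\rangle^{2\alpha/r}$ and $\langle B^{2(1-\alpha)}x,x\rangle\leq \langle B^{r}x,x\rangle^{2(1-\alpha)/r}$. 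Raising these to the power $r/2$ and multiplying gives
\begin{equation*}
|\langle A^{\alpha}XB^{1-\alpha}x,x\rangle|^{r}\leq \|X\|^{r}\langle A^{r}x,x\rangle^{\alpha}\langle B^{r}x,x\rangle^{1-\alpha}.
\end{equation*}

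Next I would use the weighted arithmetic--geometric inequality, i.e.\ Lemma \ref{le1}(a) with $a=\langle A^{r}x,x\rangle$ and $b=\langle B^{r}x,x\rangle$, which converts the geometric mean into the arithmetic mean:
\begin{equation*}
\langle A^{r}x,x\rangle^{\alpha}\langle B^{r}x,x\rangle^{1-\alpha}\leq \alpha\langle A^{r}x,x\rangle+(1-\alpha)\langle B^{r}x,x\rangle=\langle(\alpha A^{r}+(1-\alpha)B^{r})x,x\rangle.
\end{equation*}
Since $\alpha A^{r}+(1-\alpha)B^{r}$ is self-adjoint, the right-hand side is bounded above by $\|\alpha A^{r}+(1-\alpha)B^{r}\|$. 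Taking the supremum over unit vectors $x\in\mathscr{H}$ then delivers the claimed inequality.

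The single technical point to watch is the verification that the McCarty step is legitimate, i.e.\ that the exponents $\tfrac{2\alpha}{r},\tfrac{2(1-\alpha)}{r}$ are in $(0,1]$; this is precisely where the hypothesis $r\geq 2$ is used, and it is also what separates this theorem from Theorem \ref{thm:3.1}, where the two factors $A^{\alpha}$ and $B^{\alpha}$ carried the same exponent and could be handled through Lemma \ref{le1}(b) with parameters $p,q$. I expect no genuine obstacle beyond bookkeeping.
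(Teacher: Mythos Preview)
Your proof is correct and follows the paper's argument essentially step for step: Cauchy--Schwarz, then the McCarty inequality (Lemma~\ref{le2}) to pass from $\langle A^{2\alpha}x,x\rangle^{r/2}$ and $\langle B^{2(1-\alpha)}x,x\rangle^{r/2}$ to $\langle A^{r}x,x\rangle^{\alpha}\langle B^{r}x,x\rangle^{1-\alpha}$, followed by the weighted AM--GM inequality (Lemma~\ref{le1}(a)) and the supremum. Your write-up is in fact slightly more explicit than the paper's about why Lemma~\ref{le2}(b) applies, namely that $r\geq 2$ forces $\tfrac{2\alpha}{r},\tfrac{2(1-\alpha)}{r}\in[0,1]$; the paper simply cites Lemma~\ref{le2} at that step.
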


\begin{proof}
Let $x\in \mathscr{H}$ be a unit vector. Then
\begin{align}
|\langle A^\alpha X B^{1-\alpha} x,x \rangle |^r
& =
| \langle XB^{1-\alpha} x , A^\alpha x \rangle |^r
 \nonumber \\
& \leq  \|X \|^r \|B^{1-\alpha} x \|^r \|A^\alpha x\|^r
 \nonumber \\
& =  \|X \|^r  \langle B^{2(1-\alpha)}x,x \rangle^{\frac{r}{2}}
 \langle A^{2\alpha} x,x \rangle^{\frac{r}{2}}
 \nonumber  \\
& \leq
\|X \|^r  \langle A^{r} x,x \rangle^{\alpha} \langle B^{r}x,x \rangle^{1-\alpha}  \quad\qquad\text{(by\ Lemma\ \ref{le2})} \nonumber \\
& \leq  \|X \|^r
\langle \left( \alpha A^{r} +(1-\alpha)  B^{r}\right) x,x \rangle  \quad \text{(by\ Lemma\ \ref{le1}(a))} \label{eq:Heinz mean}
\end{align}
Taking the supremum over $x\in \mathscr{H}$ with $\|x\|=1$ in the above inequality we deduce the desired inequality.
\end{proof}
Noting that our inequality in previous theorem is a generalization of the second inequality \eqref{1.1} when we set $A=B=I$.\\
Now assume that $A, B, X \in \mathbb{B}(\mathscr{H})$.
the Heinz means for matrices are defined by
\begin{equation*}
H_{\alpha}(A,B)= \frac{A^\alpha XB^{1-\alpha}+A^{1-\alpha}XB^\alpha}{2}
\end{equation*}
in which $0\leq\alpha\leq1$ and $A,B\geq0$, see \cite{m2}.

Our final result in this section is to find  a numerical radius inequality for Heinz means. For this purpose, we use Theorem \ref{thm:3.3} and the convexity of function $f(t)=t^r (r\geq1).$

\begin{theorem}\label{thm 3.4}
Suppose $A, B, X \in \mathbb{B}(\mathscr{H})$ such that $A$, $B$ are positive. Then
\begin{align*}
w^{r}(A^{\frac{1}{2}}XB^{\frac{1}{2}}) & \leq
w^r\big( \frac{A^\alpha XB^{1-\alpha}+
A^{1-\alpha}XB^\alpha}{2}\big) \\
& \leq
\|X\|^{r} w\left(\frac{A^{r}+B^{r}}{2}\right) \\
& \leq
\frac{\|X\|^r}{2}\Big( \big\|\alpha A^r +(1-\alpha)B^r \big\|+
\big\|(1-\alpha) A^r +\alpha B^r \big\| \Big).
\end{align*}
for all $r\geq 2$ and $0\leq \alpha \leq 1$.
\end{theorem}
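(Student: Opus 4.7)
I would prove the three chained inequalities in Theorem~\ref{thm 3.4} separately, working from right to left. The rightmost and middle inequalities follow directly from Theorem~\ref{thm:3.3} and the convexity of $t\mapsto t^r$ (the tools hinted at in the paragraph preceding the theorem), while the leftmost inequality is a Heinz-type estimate for the numerical radius that requires a subtler argument.

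For the \emph{rightmost} inequality, note that since $A,B\ge 0$ and $r\ge 2$, the operator $\tfrac{A^r+B^r}{2}$ is positive, so $w\bigl(\tfrac{A^r+B^r}{2}\bigr)=\bigl\|\tfrac{A^r+B^r}{2}\bigr\|$. Writing
$$\frac{A^r+B^r}{2}=\tfrac12\bigl[\bigl(\alpha A^r+(1-\alpha)B^r\bigr)+\bigl((1-\alpha)A^r+\alpha B^r\bigr)\bigr]$$
and applying the triangle inequality for the operator norm gives the bound immediately.

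For the \emph{middle} inequality, I would fix a unit vector $x\in\mathscr{H}$ and apply the pointwise estimate \eqref{eq:Heinz mean} from the proof of Theorem~\ref{thm:3.3} twice---once with exponent $\alpha$ and once with exponent $1-\alpha$---to obtain
$$|\langle A^\alpha XB^{1-\alpha}x,x\rangle|^r\le\|X\|^r\bigl\langle(\alpha A^r+(1-\alpha)B^r)x,x\bigr\rangle$$
and the symmetric estimate with $\alpha\leftrightarrow 1-\alpha$. The triangle inequality for the complex modulus together with convexity of $t\mapsto t^r$ ($r\ge 1$) yields
$$|\langle H_\alpha x,x\rangle|^r\le\frac{|\langle A^\alpha XB^{1-\alpha}x,x\rangle|^r+|\langle A^{1-\alpha}XB^\alpha x,x\rangle|^r}{2}.$$
Substituting the two pointwise estimates collapses the right-hand side to $\|X\|^r\bigl\langle\tfrac{A^r+B^r}{2}x,x\bigr\rangle$, which is bounded by $\|X\|^r w\bigl(\tfrac{A^r+B^r}{2}\bigr)$ since $\tfrac{A^r+B^r}{2}$ is positive (so $\langle Px,x\rangle\le\|P\|=w(P)$). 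Taking the supremum over unit $x$ completes this step.

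For the \emph{leftmost} inequality $w^r(A^{1/2}XB^{1/2})\le w^r(H_\alpha)$, I would exploit the identity $H_\alpha=H_{1-\alpha}$, which makes $\alpha\mapsto w^r(H_\alpha)$ symmetric about $\alpha=1/2$ on $[0,1]$. If this function is convex, then its minimum is forced at the midpoint $\alpha=1/2$, giving exactly $w^r(H_{1/2})=w^r(A^{1/2}XB^{1/2})$. The required convexity is the Heinz-type statement for the numerical radius; it can be justified by a complex interpolation argument applied to the analytic operator family $z\mapsto A^zXB^{1-z}$, or invoked as a known classical result. This leftmost inequality is the main obstacle, because Theorem~\ref{thm:3.3} and convexity of $t^r$ alone only produce upper bounds for $w^r$ of Heinz-type expressions and do not compare $w^r(H_{1/2})$ with $w^r(H_\alpha)$; an additional Heinz-type input is needed to see that $H_{1/2}$ minimizes $w^r(H_\alpha)$ over $\alpha\in[0,1]$.
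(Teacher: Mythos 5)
Your handling of the middle and rightmost inequalities coincides with the paper's argument: apply the pointwise estimate \eqref{eq:Heinz mean} with both $\alpha$ and $1-\alpha$, use the triangle inequality and convexity of $t^r$ to get $|\langle H_\alpha x,x\rangle|^r\le\|X\|^r\langle\tfrac{A^r+B^r}{2}x,x\rangle$, then take suprema and finish with subadditivity of $w$ plus $w(P)=\|P\|$ for positive $P$. Those two steps are correct and essentially identical to the paper.

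The leftmost inequality is where your proposal has a genuine gap, and you have in effect flagged it yourself. You reduce it to convexity of $\alpha\mapsto w^r(H_\alpha)$ on $[0,1]$, but neither of your proposed justifications goes through. It is not a ``known classical result'': the Heinz/Bhatia--Davis convexity of $\alpha\mapsto N(A^\alpha XB^{1-\alpha}+A^{1-\alpha}XB^\alpha)$ is a theorem about \emph{unitarily invariant} norms $N$, and the numerical radius is only weakly unitarily invariant ($w(UXU^{*})=w(X)$), not invariant under $X\mapsto UXV$. The complex-interpolation route stumbles on the same point: for $F(z)=\langle A^{z}XB^{1-z}x,x\rangle$ on the strip $0\le\mathrm{Re}\,z\le1$, the boundary values involve the unitaries $A^{it}$ and $B^{it}$ acting separately on the two slots of the inner product, so a three-lines argument controls $|F|$ by operator-norm quantities like $\|AX\|$ and $\|XB\|$ rather than by the numerical radii of the endpoint Heinz means; you end up proving log-convexity of $\alpha\mapsto\|A^\alpha XB^{1-\alpha}\|$, not of $w(H_\alpha)$. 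The paper sidesteps convexity entirely and proves only the comparison it needs, via a dedicated lemma: for invertible self-adjoint $S,T$ and arbitrary $Y$,
\begin{equation*}
w(Y)\le w\left(\frac{SYT^{-1}+S^{-1}YT}{2}\right),
\end{equation*}
established first for $S=T$ and $Y$ self-adjoint from $\sigma(Y)=\sigma(SYS^{-1})\subseteq\overline{W(SYS^{-1})}$ together with taking real parts, and then in general by passing to the self-adjoint dilations $\begin{pmatrix}0&Y\\ Y^{*}&0\end{pmatrix}$ and $\begin{pmatrix}S&0\\ 0&T\end{pmatrix}$. Taking $S=A^{\alpha-\frac12}$, $T=B^{\frac12-\alpha}$ and $Y=A^{\frac12}XB^{\frac12}$ yields $w(A^{\frac12}XB^{\frac12})\le w(H_\alpha)$ directly. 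You would need to supply this lemma, or a proved substitute for the convexity claim, to close the argument.
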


To prove Theorem \ref{thm 3.4}, we need the following
lemma.

\begin{lemma}\label{lem: 3.5}
Let $A, B \in \mathbb{B}(\mathscr{H})$ be invertible
self-adjoint operators and $X\in \mathbb{B}(\mathscr{H})$.
Then
\begin{align*}
w (X)
& \leq
w\left( \frac{AXB^{-1}+A^{-1}XB}{2}\right).
\end{align*}
\end{lemma}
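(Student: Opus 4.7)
The plan is to prove the pointwise bound $|\langle Xx,x\rangle|\leq w(S)$ for every unit vector $x$, where $S:=(AXB^{-1}+A^{-1}XB)/2$. The opening observation is that the self-adjointness of $A$ and $B$ furnishes the two companion identities
$$
\langle Xx,x\rangle = \langle (AXB^{-1})(Bx), A^{-1}x\rangle = \langle (A^{-1}XB)(B^{-1}x), Ax\rangle,
$$
since e.g.\ $\langle AXB^{-1}(Bx),A^{-1}x\rangle = \langle A^{-1}AXx,x\rangle = \langle Xx,x\rangle$ and similarly for the second. Thus $\langle Xx,x\rangle$ is the common value of two cross inner products, one built from each summand of $2S$.

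My next step is to apply the polarization identity \eqref{polarization identity} in the form $\langle Tu,v\rangle = \tfrac{1}{4}\sum_{k=0}^{3}i^{k}\langle T(u+i^{k}v), u+i^{k}v\rangle$ to each cross inner product, with $T=AXB^{-1}$, $u=Bx$, $v=A^{-1}x$ in the first and $T=A^{-1}XB$, $u=B^{-1}x$, $v=Ax$ in the second. This turns the ``off-diagonal'' inner products into sums of diagonal inner products $\langle T z, z\rangle$, each of which is controlled by $w(T)\|z\|^{2}$, and after pairing the terms from the two polarizations the combination $AXB^{-1}+A^{-1}XB = 2S$ appears, so each diagonal piece is bounded by $2w(S)\|z\|^{2}$.

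To pin down the sharp constant $1$, I plan to invoke the real-part representation $w(T)=\sup_{\theta}\|\mathrm{Re}(e^{i\theta}T)\|$ from \eqref{eq:real part}: fix $\theta$ so that $\mathrm{Re}(e^{i\theta}\langle Xx,x\rangle)=|\langle Xx,x\rangle|$, apply the polarization to $\mathrm{Re}(e^{i\theta}\langle\cdot,\cdot\rangle)$ in both representations, and use the parallelogram identity $\|u+v\|^{2}+\|u-v\|^{2}=2(\|u\|^{2}+\|v\|^{2})$ to normalize the auxiliary vectors $Bx\pm A^{-1}x$ and $B^{-1}x\pm Ax$ against one another.

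The main obstacle I anticipate is precisely this last bookkeeping step: the two polarizations naturally live on different pairs of auxiliary vectors, so the combinations do not instantly land on a common unit vector. Absorbing this mismatch without losing a factor of two will, I expect, use in an essential way both self-adjoint identities $A=A^{*}$ and $B=B^{*}$ (so that the two cross inner products are exactly equal, not merely complex conjugate, allowing the polarizations to be interleaved) together with the freedom in the phase $\theta$ provided by \eqref{eq:real part}.
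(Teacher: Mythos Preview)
Your proposal has a genuine gap at exactly the point you yourself flag. The two polarizations
\[
\langle Xx,x\rangle=\tfrac{1}{4}\sum_{k}i^{k}\langle (AXB^{-1})z_{k},z_{k}\rangle
=\tfrac{1}{4}\sum_{k}i^{k}\langle (A^{-1}XB)\tilde z_{k},\tilde z_{k}\rangle
\]
live on \emph{different} families of vectors $z_{k}=Bx+i^{k}A^{-1}x$ and $\tilde z_{k}=B^{-1}x+i^{k}Ax$, so adding them does not produce diagonal terms of the form $\langle 2S\,\zeta,\zeta\rangle$ for a common $\zeta$. Bounding each summand separately only gives control by $w(AXB^{-1})$ and $w(A^{-1}XB)$, not by $w(S)$; and even with the parallelogram law one cannot normalize $\|z_{k}\|^{2}$ against $\|\tilde z_{k}\|^{2}$ in a way that forces the two operator pieces onto the same vector. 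The appeal to ``interleaving'' the polarizations via self-adjointness and the phase $\theta$ is not a mechanism---self-adjointness of $A,B$ was already fully spent in establishing the two identities for $\langle Xx,x\rangle$, and the phase only rotates a scalar; neither reconciles the mismatch between $z_{k}$ and $\tilde z_{k}$. As written, the argument stalls before reaching $w(S)$ with constant $1$.

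The paper's route is entirely different and avoids this obstacle: it first treats the case $A=B$ with $X$ self-adjoint by a spectral argument (the spectrum is invariant under similarity, so $\sigma(X)=\sigma(AXA^{-1})\subseteq\overline{W(AXA^{-1})}$, and reality of the spectrum pushes this into $\overline{W(\mathrm{Re}(AXA^{-1}))}$, giving $w(X)=r(X)\le w(\mathrm{Re}(AXA^{-1}))$), and then lifts to general $A,B,X$ via the $2\times 2$ block trick with $\hat X=\left(\begin{smallmatrix}0&X\\X^{*}&0\end{smallmatrix}\right)$ and $\hat A=\left(\begin{smallmatrix}A&0\\0&B\end{smallmatrix}\right)$, using $w(\hat X)=w(X)$. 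No polarization or vector bookkeeping is needed.
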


\begin{proof}
First of all, we shall show the case $A=B$ and
$X$ is self-adjoint.
Let $\lambda \in \sigma(X)$. Then
$$
\lambda \in \sigma (X)=\sigma (AXA^{-1})
\subseteq \overline{W(AXA^{-1})}.
$$
Since $\lambda \in \mathbb{R}$, we have
$$ \lambda=\mbox{Re}\lambda \in
\mbox{Re} \overline{W(AXA^{-1})}=
\overline{W(\mbox{Re}(AXA^{-1}))}. $$
So we obtain
$$ w(X)=r(X)\leq w(\mbox{Re}(AXA^{-1}))=
w\left( \frac{AXA^{-1}+A^{-1}XA}{2}\right). $$

\medskip

Next we shall show this lemma for arbitrary
$X\in \mathbb{B}(\mathscr{H})$ and
invertible self-adjoint operators $A$ and $ B$.
Let $\hat{X}=\begin{pmatrix} 0 & X \\ X^{*} & 0
\end{pmatrix}$
and $\hat{A}=\begin{pmatrix} A & 0 \\ 0 & B
\end{pmatrix}$.
Then $\hat{X}$ and $\hat{A}$ are self-adjoint. Hence we have
$$ w(\hat{X})\leq
w\left( \frac{\hat{A}\hat{X}\hat{A}^{-1}+
\hat{A}^{-1}\hat{X}\hat{A}}{2}\right). $$
Here $w(\hat{X})=w(X)$ and
\begin{align*}
w\left( \frac{\hat{A}\hat{X}\hat{A}^{-1}+
\hat{A}^{-1}\hat{X}\hat{A}}{2}\right)
& =
\frac{1}{2} w(
\begin{pmatrix}
0 & AXB^{-1}+A^{-1}XB \\
BX^{*}A^{-1}+B^{-1}X^{*}A & 0
\end{pmatrix} )\\
& =
\frac{1}{2} w(AXB^{-1}+A^{-1}XB  ).
\end{align*}
Therefore we obtain the desired inequality.
\end{proof}

\begin{proof}[Proof of Theorem \ref{thm 3.4}.]
We may assume that $A$ and $B$ are invertible.
By Lemma \ref{lem: 3.5}, we have
\begin{align*}
 w^{r}(A^{\frac{1}{2}}XB^{\frac{1}{2}})
& \leq
w^{r} \left(\frac{A^{\alpha-\frac{1}{2}} \cdot
A^{\frac{1}{2}}XB^{\frac{1}{2}}\cdot B^{\frac{1}{2}-\alpha}+
A^{\frac{1}{2}-\alpha} \cdot
A^{\frac{1}{2}}XB^{\frac{1}{2}}\cdot B^{\alpha-\frac{1}{2}}}{2}\right)\\
& =
w^{r}\left(\frac{A^{\alpha} XB^{1-\alpha}+
A^{1-\alpha}XB^{\alpha}}{2}\right).
\end{align*}
On the other hand, by inequality \eqref{eq:Heinz mean}, for any $r\geq 2$ we have
$$|\langle A^\alpha X B^{1-\alpha} x,x \rangle |^r
 \leq  \|X \|^r
\langle \left( \alpha A^{r} +(1-\alpha)  B^{r}\right) x,x \rangle .
$$
Hence we have
\begin{align*}
& |\langle
\frac{A^\alpha X B^{1-\alpha}+
A^{1-\alpha} X B^{\alpha}}{2}x,x\rangle|^{r}\\
& \leq
\left( \frac{|\langle
A^\alpha X B^{1-\alpha}x,x\rangle| +
|\langle A^{1-\alpha} X B^{\alpha}x,x\rangle|}{2}\right)^{r} \\
& \leq
 \frac{|\langle
A^\alpha X B^{1-\alpha}x,x\rangle|^{r} +
|\langle A^{1-\alpha} X B^{\alpha}x,x\rangle|^{r}}{2}
\quad (\mbox{ by the convexity of $t^{r}$})\\
& \leq
\frac{\|X \|^{r}}{2}
\left\{
\langle \left( \alpha A^{r} +(1-\alpha)  B^{r}\right) x,x \rangle
+
\langle \left( (1-\alpha) A^{r} +\alpha  B^{r}\right) x,x \rangle\right\}\\
& =
\|X \|^r
\langle  \frac{A^{r} +  B^{r}}{2} x,x \rangle.
\end{align*}
Therefore we obtain
\begin{align*}
&  w^{r} \left(\frac{A^{\alpha}XB^{1-\alpha}+
A^{1-\alpha}XB^{\alpha}}{2}\right) \\
& \leq
\|X\|^{r} w\left(\frac{A^{r}+B^{r}}{2}\right) \\
& \leq
\frac{\|X\|^{r}}{2} \left(
w(\alpha A^{r}+(1-\alpha)B^{r})+
w((1-\alpha) A^{r}+\alpha B^{r})\right)\\
& =
\frac{\|X\|^{r}}{2} \left(
\| \alpha A^{r}+(1-\alpha)B^{r}\|+
\|(1-\alpha) A^{r}+\alpha B^{r}\| \right).
\end{align*}
\end{proof}

\bigskip
\noindent
{\bf Acknowledgment.} The authors would like to
express our cordial thanks to Professor Fumio Hiai
to giving us a crucial comment. 

\bigskip
\bibliographystyle{amsplain}

\end{document}